\documentclass{amsart}
\usepackage{amsmath}
\usepackage{amssymb}
\usepackage{amsthm}

\newtheorem{theorem}{Theorem}

\newtheorem{lemma}{Lemma}
\newtheorem{rem}{Remark}

\newtheorem{cor}{Corollary}

\begin{document}

\title{The graphs of projective codes}
\author{Mariusz Kwiatkowski, Mark Pankov, Antonio Pasini}
\subjclass[2000]{51E22, 94B27}
\keywords{linear code, projective code, simplex code, Grassmann graph}
\address{Faculty of Mathematics and Computer Science, 
University of Warmia and Mazury, S{\l}oneczna 54, Olsztyn, Poland}
\email{mkw@matman.uwm.edu.pl, pankov@matman.uwm.edu.pl}
\address{Department of information engineering and mathematics, University of Siena, Via Roma 56, Siena, Italy}
\email{pasini@unisi.it}

\maketitle

\begin{abstract}
Consider the Grassmann graph formed by $k$-dimensional subspaces of an $n$-dimensional vector space 
over the field of $q$ elements ($1<k<n-1$) and 
denote by $\Pi(n,k)_q$ the restriction of this graph to the set of projective $[n,k]_q$ codes.
In the case when $q\ge \binom{n}{2}$, we show that
the graph $\Pi(n,k)_q$ is connected, its diameter is equal to the diameter of the Grassmann graph
and the distance between any two vertices coincides with the distance between these vertices in the Grassmann graph.
Also, we give some observations concerning the graphs of simplex codes. 
For example, binary simplex codes of dimension $3$ are precisely maximal singular subspaces of 
a non-degenerate quadratic form.
\end{abstract}

\section{Introduction}
The Grassmann graph formed by $k$-dimensional subspaces of an $n$-dimensional vector space over 
the field of $q$ elements (we always suppose that $1<k<n-1$) can be considered as the graph of all linear $[n,k]_q$ codes, where
two distinct codes are adjacent vertices if  they have the maximal possible number of common codewords.
This is one of the classical examples of distance regular graphs \cite[Section 9.3]{drg-book}. 
Recall that the distance between two vertices in a connected graph $\Gamma$ 
is the number of edges in a shortest path connecting these vertices.
If $\Gamma'$ is a connected subgraph of $\Gamma$, then the distance between two vertices of $\Gamma'$
can be greater than the distance between these vertices in $\Gamma$.

It is natural to reduce  the study of linear codes to non-degenerate codes only, 
i.e. the case when generator matrices do not contain zero columns.
The restriction of the Grassmann graph to the set of non-degenerated linear $[n,k]_q$ codes is investigated in \cite{KP1,KP2}.
This graph is connected, but the distance between any two vertices coincides with 
the distance between these vertices in the Grassmann graph only in the case when
$$n<(q+1)^2 +k-2.$$
In this paper, we consider the subgraph of the Grassmann graph formed by all projective codes, 
i.e. linear codes whose generator matrices do not contain proportional columns.
Projective $[n,k]_q$ codes can be obtained from $n$-element subsets of $(k-1)$-dimensional projective spaces over the field of $q$ elements 
\cite[Section 1.1]{TVN} (see, for example, \cite{CG,N,R} for projective codes associated to Grassmannians embedded in projective spaces).
Our main result (Theorem 1) is the following: if $q\ge \binom{n}{2}$,
then the graph of projective $[n,k]_q$ codes is connected, its diameter is equal to the diameter of the Grassmann graph
and the distance between any two vertices coincides with the distance between these vertices in the Grassmann graph.

Simplex codes (linear codes dual to Hamming codes) are projective codes which do not satisfy the above inequality,
but the latter statement holds for the graph of $3$-dimensional binary simplex codes.
We show that simplex codes can be described by a system of polynomial equations
(the number of equations and their degrees depend  on the code dimension and the size of the field).
In particular, binary simplex codes of dimension $3$ are precisely maximal singular subspaces of 
a non-degenerate quadratic form on a $6$-dimensional vector space over the field of two elements.
The latter implies that the graph of binary simplex codes of dimension $3$ is isomorphic to 
the incidence graph formed by $1$-dimensional and $3$-dimensional subspaces of a $4$-dimensional vector space over 
the two-element field.
We  also consider examples related to the graphs of $4$-dimensional binary simplex codes and $3$-dimensional ternary simplex codes.

\section{Basic objects}
\subsection{Grassmann graphs}
Let $\mathbb{F}_{q}$ be the finite field consisting of $q$ elements.
Consider the $n$-dimensional vector space 
$$V=\underbrace{{\mathbb F}_{q}\times\dots \times{\mathbb F}_{q}}_{n}$$
over this field.
The Grassmann graph $\Gamma_{k}(V)$ is the graph whose vertices are $k$-dimensional subspaces of $V$
and two such subspaces are adjacent vertices of this graph if their intersection is $(k-1)$-dimensional.
The graph $\Gamma_{k}(V)$ is connected and 
the distance between any two $k$-dimensional subspaces  $X$ and $Y$ in this graph is equal to 
$$k-\dim(X\cap Y).$$
The diameter of $\Gamma_{k}(V)$ is equal to $k$ if $n\ge 2k$ and the diameter is $n-k$ if $n<2k$.

Recall that there are precisely 
$$\frac{q^{m}-1}{q-1}=q^{m-1}+\dots+q+1$$
distinct $1$-dimensional subspaces in an $m$-dimensional vector space over $\mathbb{F}_{q}$.
In what follows, this number will be denoted by $[m]_q$.
The number of hyperplanes in this vector space also is equal to $[m]_q$.

Suppose that the distance between $k$-dimensional subspaces $X$ and $Y$ in the Grassmann graph is equal to $m$. 
Then $X\cap Y$ is $(k-m)$-dimensional. 
It is well-known that every geodesic in $\Gamma_{k}(V)$ connecting $X$ and $Y$ is formed by 
$k$-dimensional subspaces containing $X\cap Y$.
The number of hyperplanes of $X$ containing $X\cap Y$ and  
the number of $(k-m+1)$-dimensional subspaces of $Y$ containing $X\cap Y$ both are equal to $[m]_q$.
This implies that there are precisely $[m]_q^2$ vertices of $\Gamma_{k}(V)$ adjacent to $X$ and at distance $m-1$ from $Y$.
Applying the same arguments to distances less than $m$, we establish that 
the number of geodesics in $\Gamma_{k}(V)$ connecting $X$ and $Y$ is equal to
$$[m]_{q}^{2}[m-1]_{q}^2\dots [2]_{q}^2.$$
See \cite[Section 9.3]{drg-book} for more properties of Grassmann graphs.
 
\subsection{Linear codes}
A {\it linear} $[n,k]_{q}$ {\it code} $C$ is a $k$-dimensional subspace of $V$;
non-zero vectors belonging to this subspace are called {\it codewords} of $C$.

The standard basis of $V$ is formed by the vectors 
$$e_{1}=(1,0,\dots,0),\dots,e_{n}=(0,\dots,0,1).$$
Denote by $c_{i}$ the $i$-th coordinate functional $(x_{1},\dots,x_{n})\to x_{i}$
and write $C_{i}$ for the hyperplane of $V$ which is the kernel of $c_i$.

A linear $[n,k]_{q}$ code $C$ is {\it non-degenerate} if the restriction of every coordinate functional to $C$ is non-zero.
In this case, we write $P_{i}$ for the $1$-dimensional subspace of $C^{*}$ containing $c_{i}|_{C}$.
Note that the equality $P_{i}=P_{j}$ is possible for some distinct $i,j$.
The collection ${\mathcal P}(C)$ formed by all $P_{i}$ is called the {\it projective system} associated to the linear code $C$ \cite[Section 1.1]{TVN}.
Since the code $C$ is assumed to be non-degenerated, the projective system contains $k$ elements whose sum coincides with $C^{*}$.
Our linear code is said to be {\it projective} if all $P_{i}$ are mutually distinct, i.e. 
${\mathcal P}(C)$ is an $n$-element subset of the projective space corresponding to $C^{*}$.

Let $x_{1},\dots,x_{k}$ be a basis of $C$ and let $x^{*}_{1},\dots,x^{*}_{k}$ be the dual basis of $C^{*}$, 
i.e. $x^{*}_{i}(x_{j})=\delta_{ij}$ (Kronecker delta).
Consider the generator matrix $M$ of $C$ whose rows are $x_{1},\dots,x_{k}$.
If $(a_{1j},\dots, a_{kj})$ is the $j$-column of $M$, then 
$$c_{j}|_{C}= a_{1j}x^{*}_{1}+\dots+a_{kj}x^{*}_{k}.$$
Therefore, the code $C$ is projective if and only if any two columns in $M$ are non-proportional.
The latter condition is equivalent to the fact that $C\cap C_{i}$ and $C\cap C_{j}$ are distinct hyperplanes of $C$ for any distinct $i,j$.

For fixed $n$ and $q$ the following assertions are fulfilled:
\begin{enumerate}
\item[$\bullet$] for a given number $k\in \{2,\dots,n-2\}$  projective $[n,k]_q$ codes exist if and only if $[k]_{q}\ge n$;
\item[$\bullet$] projective $[n,k]_q$ codes exist for all $k\in \{2,\dots,n-1\}$ if and only if $q+1\ge n$.
\end{enumerate}
In the case when $n=[k]_{q}$, projective $[n,k]_q$ codes are known as {\it $q$-ary simplex codes of dimension $k$}
(see, for example, \cite[Section 1.8]{HP-book}).
The projective system of such a code consists of all points of a $(k-1)$-dimensional projective space.
By general properties of projective systems (see \cite[Theorem 1.1.6]{TVN} or \cite[Lemma 2.20]{Pank-book}),
this implies that any two $q$-ary simplex codes of dimension $k$ are equivalent, i.e.
there is a monomial linear automorphism of $V$ transferring one of these codes to the other.

\section{The distance between projective codes}

\subsection{Results}
Denote by $\Pi(n,k)_{q}$ the restriction of the Grassmann graph $\Gamma_{k}(V)$ to the set of all projective $[n,k]_{q}$ codes,
i.e. the vertices of the graph $\Pi(n,k)_{q}$ are projective $[n,k]_{q}$ codes and two such codes are adjacent vertices of the graph
if their intersection is $(k-1)$-dimensional.

In the case when $k=1,n-1$, any two distinct vertices of $\Gamma_{k}(V)$ are adjacent.
For this reason, we will alway suppose that $1<k<n-1$.

\begin{theorem}
Suppose that $q\ge \binom{n}{2}$.
Then $\Pi(n,k)_{q}$ is a connected graph of diameter $$\min\{k,n-k\},$$
the distance between any two projective $[n,k]_{q}$ codes $X$ and $Y$ in this graph is equal to $$k-\dim(X\cap Y)$$
{\rm(}i.e. it coincides with the distance between $X$ and $Y$ in the Grassmann graph{\rm)}.
If this distance is equal to $m$, then there are at least 
$$[m]_{q}[m-1]_{q}\dots[2]_{q}$$
distinct geodesics in $\Pi(n,k)_q$ connecting $X$ and $Y$.
\end{theorem}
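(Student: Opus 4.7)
My plan is to prove the distance equality by induction on $m:=k-\dim(X\cap Y)$, obtaining the lower bound on geodesics simultaneously. The base case $m=1$ is immediate: adjacency in $\Gamma_k(V)$ restricted to projective codes coincides with adjacency in $\Pi(n,k)_q$. For the inductive step with $m\ge 2$, the key is to find at least $[m]_q$ projective codes $Z$ that are adjacent to $X$ in $\Gamma_k(V)$ and at Grassmann distance $m-1$ from $Y$; applying the inductive hypothesis to each pair $(Z,Y)$ then yields at least $[m]_q[m-1]_q\cdots[2]_q$ geodesics in $\Pi(n,k)_q$ between $X$ and $Y$, and in particular forces the $\Pi$-distance to equal $m$.

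Let $F$ be the set of $Z\in\Gamma_k(V)$ adjacent to $X$ and at Grassmann distance $m-1$ from $Y$; by the discussion in Section 2.1, $|F|=[m]_q^2$ and each such $Z$ decomposes uniquely as $Z=H_X+U_Y$, where $H_X$ is a hyperplane of $X$ through $B:=X\cap Y$ and $U_Y$ is a $(k-m+1)$-dimensional subspace of $Y$ through $B$. A member $Z\in F$ fails to be projective exactly when it lies in some hyperplane of $V$ of the form $\ker c_i$ ($1\le i\le n$) or $\ker(c_i-\lambda c_j)$ ($i\ne j$, $\lambda\in\mathbb{F}_q\setminus\{0\}$); call these ``bad''. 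There are $N:=n+\binom{n}{2}(q-1)$ bad hyperplanes in all. Since every $Z\in F$ contains $B$ and $X,Y$ are projective, only bad $H$ with $B\subseteq H$, $X\not\subseteq H$ and $Y\not\subseteq H$ can absorb elements of $F$. Passing to the quotient $V/B$, the conditions $H_X\subseteq H$ and $U_Y\subseteq H$ force $H_X=H\cap X$ uniquely, and permit $U_Y$ to be any $(k-m+1)$-dimensional subspace of $Y$ through $B$ contained in the hyperplane $H\cap Y$ of $Y$; there are exactly $[m-1]_q$ such $U_Y$. Hence at most $N\cdot[m-1]_q$ elements of $F$ are non-projective.

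It remains to check that $[m]_q^2-N[m-1]_q\ge[m]_q$. Using the identity $[m]_q=1+q[m-1]_q$, this inequality reduces to $q[m]_q\ge N$. Since $[m]_q\ge q+1$ for $m\ge 2$ and $q\ge\binom{n}{2}$, one gets
\[
q[m]_q\ \ge\ q(q+1)\ \ge\ \tbinom{n}{2}(q+1)\ =\ \tbinom{n}{2}(q-1)+2\tbinom{n}{2}\ \ge\ n+\tbinom{n}{2}(q-1)\ =\ N,
\]
the last step using $2\binom{n}{2}=n(n-1)\ge n$, which holds since $n\ge 4$ (as $1<k<n-1$). This closes the induction and yields the distance equality together with the claimed lower bound on geodesics. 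The diameter equals $\min\{k,n-k\}$ because the distance equality bounds it above by the Grassmann diameter, and the lower bound follows from the existence of a projective pair at maximal Grassmann distance, which is established by a similar counting argument applied to $k$-subspaces $Y$ with prescribed intersection dimension with a fixed projective $X$.

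The principal obstacle is the structural count: pinning down that each bad hyperplane through $B$ absorbs exactly $[m-1]_q$ elements of $F$. This requires exploiting the projectivity of both $X$ and $Y$ simultaneously (to ensure $X\not\subseteq H$ and $Y\not\subseteq H$) and is cleanest in the quotient $V/B$, where $X/B$ and $Y/B$ are two disjoint $m$-dimensional subspaces and $F$ is naturally parametrized by $[m]_q^2$ pairs consisting of a hyperplane of $X/B$ and a line in $Y/B$. Once this count is in place the arithmetic is routine, and one sees that the threshold $q\ge\binom{n}{2}$ is exactly what the induction needs at every level.
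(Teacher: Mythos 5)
Your core argument is correct but follows a genuinely different route from the paper's. Where you handle the inductive step by one uniform count --- parametrizing the $[m]_q^2$ Grassmann-neighbours of $X$ at distance $m-1$ from $Y$ as sums $H_X+U_Y$ over $B=X\cap Y$, observing that a code is non-projective exactly when it lies in one of the $N=n+\binom{n}{2}(q-1)$ hyperplanes $\ker c_i$ or $\ker(c_i-\lambda c_j)$, and checking that each such hyperplane through $B$ absorbs at most $[m-1]_q$ of them --- the paper splits into two cases: a pigeonhole argument for $m=2$ (among the $q+1$ candidates $H+H_t$, not all can meet some $C_{ij}$ in dimension $k-1$, since $q+1>\binom{n}{2}$), and for $m\ge 3$ a separate lemma producing a \emph{projective} hyperplane $X'$ of $X$ containing $X\cap Y$, after which all $[m]_q$ codes $X'+U_Y$ are automatically projective. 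Your structural count is right (uniqueness of $H_X=H\cap X$ uses $X\not\subseteq H$, the $[m-1]_q$ choices of $U_Y$ inside the hyperplane $H\cap Y$ of $Y$ use $Y\not\subseteq H$, and both hold because $X,Y$ are projective), and the arithmetic $q[m]_q\ge q(q+1)\ge\binom{n}{2}(q+1)\ge N$ is valid. What your version buys is a single mechanism valid for all $m\ge 2$ and a transparent role for the threshold $q\ge\binom{n}{2}$; what the paper's version buys is that its Lemma 2 (projective hyperplanes through a given small subspace) is reusable elsewhere.

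The one genuine gap is the diameter lower bound. You assert that the existence of a projective pair $X,Y$ with $\dim(X\cap Y)=\max\{0,2k-n\}$ follows from ``a similar counting argument,'' but you never carry it out, and it is not the same computation: there one must compare the number of $k$-subspaces $Y$ with the prescribed intersection against $N$ times the number of such $Y$ inside a single bad hyperplane, which (for $2k\le n$) reduces to $q^k>N$ and is tight-ish when $k=2$ and $q=\binom{n}{2}$; the case $2k>n$ needs an additional reduction. The paper does not count at all here: its Lemma \ref{lemma1-4} exhibits an explicit pair via generator matrices $[I,A,B]$ and $[\lambda A,I,B]$, choosing $a$ and $\lambda$ so that $\lambda^{-1}$ avoids the eigenvalues of $A^2$, and handles $n<2k$ by dualizing and adding a common complement $Z$. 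Your sketch would be complete if you supplied this count (or an explicit construction); as written, the diameter claim rests on an unproved assertion.
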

The condition $q\ge \binom{n}{2}$ does not hold if $n=[k]_q$ (the case of simplex codes).
In the case when
$$n=15,\;k=4,\;q=2\;\mbox{ or }\;n=13,\;k=3,\;q=3,$$
there are projective $[n,k]_q$ codes $X,Y$ such that $\dim (X\cap Y)=k-2$
and there is no projective $[n,k]_q$ code adjacent to both $X$ and $Y$ (Subsection 4.2).

On the other hand, the graph $\Pi(7,3)_2$ 
is isomorphic to the incidence graph $\Gamma_{1,3}({\mathbb F}^{4}_{2})$
whose vertices are $1$-dimensional and $3$-dimensional subspaces of ${\mathbb F}^{4}_{2}$
and two distinct subspaces are adjacent vertices of the graph if they are incident.
So, $\Pi(7,3)_2$ has properties similar to the properties described in Theorem 1.

\subsection{Proof of Theorem 1}
Recall that $C_{i}$ is the kernel of the $i$-th coordinate functional.
For distinct $i$ and $j$ we denote by $C_{ij}$ the intersection of $C_{i}$ and $C_{j}$;
this is an $(n-2)$-dimensional subspace.
For every non-degenerate linear $[n,k]_{q}$ code $X$ the intersection $X\cap C_{i}$ is $(k-1)$-dimensional. 
This code is projective if and only if 
$$X\cap C_{i}\ne X\cap C_{j}$$
for any distinct $i,j$.
The equality 
$$X\cap C_{i}=X\cap C_{j}$$
is equivalent to the fact that $X\cap C_{ij}$ is $(k-1)$-dimensional. 
Therefore, a linear $[n,k]_{q}$ code $X$ is projective if and only if
all $X\cap C_{ij}$ are $(k-2)$-dimensional.

\begin{lemma}\label{lemma1-1}
Let $X$ and $Y$ be projective $[n,k]_q$ codes such that 
$$dim(X\cap Y)=k-2.$$
If $q\ge \binom{n}{2}$, then there are $q+1$ distinct projective $[n,k]_q$ codes adjacent to both $X$ and $Y$.
\end{lemma}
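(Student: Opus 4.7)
The plan is to start from all common neighbors of $X$ and $Y$ in the ambient Grassmann graph $\Gamma_k(V)$ and discard those that fail to be projective. Set $W := X \cap Y$, of dimension $k-2$. A dimension count shows that every $k$-dimensional $Z$ adjacent to both $X$ and $Y$ satisfies $W \subset Z \subset X+Y$, and in the $4$-dimensional quotient $(X+Y)/W = X/W \oplus Y/W$ such a $Z$ corresponds bijectively to a pair of lines $(L_X, L_Y)$ with $L_X \subset X/W$ and $L_Y \subset Y/W$, via $Z/W = L_X \oplus L_Y$. This matches the paper's general formula $[m]_q^2$ at $m=2$ and gives exactly $(q+1)^2$ common neighbors to sift through.

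Next I would encode failure of projectivity by containment in a small family of ``bad'' hyperplanes of $V$: the coordinate hyperplanes $C_i = \ker c_i$ for $i = 1,\dots,n$, into which only degenerate codes embed; and the hyperplanes $H^\lambda_{ij} := \ker(c_i - \lambda c_j)$ for $i<j$ and $\lambda \in \mathbb{F}_q^*$, inside which $c_i|_Z$ and $c_j|_Z$ become proportional. A non-degenerate $k$-subspace $Z$ is non-projective exactly when it lies in some $H^\lambda_{ij}$, so the total number of bad hyperplanes relevant to us is $n + \binom{n}{2}(q-1)$.

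The crux is to show that each bad hyperplane $H$ contains at most one common neighbor of $X$ and $Y$. The projectivity of $X$ rules out $X \subset C_i$ for any $i$ (non-degeneracy) and $X \subset H^\lambda_{ij}$ for any $i,j,\lambda$ (otherwise $c_i|_X$ and $c_j|_X$ would be proportional); the same holds for $Y$, and hence $X+Y \not\subset H$. If a common neighbor $Z$ lies in $H$, then necessarily $W \subset H$, and in the quotient $(X+Y)/W$ the subspace $((X+Y)\cap H)/W$ is a $3$-dimensional hyperplane meeting each of the complementary $2$-planes $X/W$ and $Y/W$ in exactly one line, $L_X$ and $L_Y$. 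The adjacency conditions force $Z/W \supset L_X$ and $Z/W \supset L_Y$, so $Z/W = L_X \oplus L_Y$ is uniquely determined.

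Putting this together, the number of projective common neighbors is at least
$$(q+1)^2 - n - \binom{n}{2}(q-1),$$
and an elementary estimate using $q \ge \binom{n}{2} \ge n$ (which holds because $n \ge 4$) shows that this quantity is at least $q+1$. The main obstacle I foresee is the uniqueness statement in the third paragraph: one must carefully exploit the projectivity of $X$ and $Y$ to prevent $X+Y \subset H$ (otherwise a single bad hyperplane could sweep out all $(q+1)^2$ common neighbors at once) and then execute the dimension bookkeeping in the $4$-dimensional quotient to isolate a single $Z$ per bad hyperplane.
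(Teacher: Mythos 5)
Your proof is correct, but it proceeds along a genuinely different route from the paper's. The paper fixes one of the $q+1$ hyperplanes $H$ of $X$ through $W=X\cap Y$ and examines the $q+1$ candidates $H+H_t$ with $H_t$ ranging over the hyperplanes of $Y$ through $W$; since $q+1>\binom{n}{2}$, a pigeonhole argument forces two non-projective candidates to fail via the same subspace $C_{ij}=C_i\cap C_j$, and the sum of the two witnessing $(k-1)$-dimensional subspaces is then shown to meet $Y$ in a $(k-1)$-dimensional subspace of $C_{ij}$, contradicting projectivity of $Y$. Repeating over the $q+1$ choices of $H$ yields the $q+1$ codes. You instead run a global union bound over all $(q+1)^2$ common neighbours: you enlarge the obstruction set from the $\binom{n}{2}$ subspaces $C_{ij}$ to the $n+\binom{n}{2}(q-1)$ hyperplanes $C_i$ and $\ker(c_i-\lambda c_j)$, correctly observe that containment in one of these is exactly the failure of projectivity, and prove the key uniqueness claim (at most one common neighbour per bad hyperplane) by the quotient argument in $(X+Y)/W$ — which is sound, since projectivity of $X$ and $Y$ prevents either from lying in a bad hyperplane, so the trace of the bad hyperplane on $(X+Y)/W$ is a solid meeting each of $X/W$, $Y/W$ in a single line, pinning down $Z/W$. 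Your final estimate $q^2+q\ge n+\binom{n}{2}(q-1)$ does follow from $q\ge\binom{n}{2}\ge n$, and in fact delivers at least $2q+1$ projective common neighbours, slightly more than the lemma asserts. The trade-off: the paper's local argument is shorter and organizes the $q+1$ outputs so that they visibly meet $X$ in $q+1$ distinct hyperplanes, which is what Lemma 3 later iterates; your counting argument gives a sharper lower bound and isolates cleanly where the hypothesis $q\ge\binom{n}{2}$ is spent, at the cost of introducing the larger family of $\binom{n}{2}(q-1)+n$ bad hyperplanes.
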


\begin{proof}
There are precisely $q+1$ hyperplanes of $Y$ containing $X\cap Y$.
We denote them by $H_{1},\dots,H_{q+1}$ and show that 
for every hyperplane $H\subset X$ containing $X\cap Y$ at least one of the linear $[n,k]_q$ codes $H+H_{t}$ is projective.

If all $H+H_{t}$ are non-projective codes, then for every $t\in \{1,\dots,q+1\}$ there is at least one pair $i(t),j(t)$ such that 
$$(H+H_{t})\cap C_{i(t)j(t)}$$
is a $(k-1)$-dimensional subspace. 
Since $q+1>\binom{n}{2}$ and there are precisely $\binom{n}{2}$ distinct $C_{ij}$, 
we can find distinct $t,s\in  \{1,\dots,q+1\}$ and distinct $i,j\in \{1,\dots,n\}$ such that 
$$S_{t}=(H+H_{t})\cap C_{ij}\;\mbox{ and }\;S_{s}=(H+H_{s})\cap C_{ij}$$
both are $(k-1)$-dimensional subspaces.
Suppose that $S_{t}$ coincides with $S_{s}$.
Since $H+H_{t}$ and $H+H_{s}$ are distinct and their intersection is $H$, 
the subspace $S_{t}=S_{s}$ coincides with $H$ which means that $H$ is contained in $C_{ij}$
and the code $X$ is not projective. 
Therefore, $S_{t}\ne S_{s}$.

So, the dimension of $S_{t}+S_{s}$ is not less than $k$ and this subspace is contained in $C_{ij}$. 
Since $Y$ and $S_{t}+S_{s}$ both are contained in the $(k+1)$-dimensional subspace $H+Y$,
the intersection of these subspaces is not less than $k-1$. 
Then $C_{ij}$ intersects $Y$ in a $(k-1)$-dimensional subspace which contradicts the fact that the code $Y$ is projective.
\end{proof}

\begin{lemma}\label{lemma1-2}
If $q\ge \binom{n}{2}$, then for every projective $[n,k]_q$ code $X$ and for every subspace $U\subset X$
whose dimension is less than $k-2$ there is a projective $[n,k-1]_q$ code $X'$  satisfying $U\subset X'\subset X$.
\end{lemma}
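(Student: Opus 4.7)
The plan is to find the hyperplane $X'$ of $X$ by a counting argument: show that among the $(k-1)$-dimensional subspaces of $X$ containing $U$, only a small proportion can fail to be projective, and that the hypothesis $q\ge\binom{n}{2}$ leaves at least one hyperplane available.

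First I would reformulate projectivity in terms of the intersections $W_{ij}:=X\cap C_{ij}$. Since $X$ is a projective $[n,k]_q$ code, each $W_{ij}$ has dimension $k-2$. If $X'\subset X$ is a hyperplane, then $X'\cap C_{ij}=X'\cap W_{ij}$ has dimension either $k-3$ (the ``good'' case) or $k-2$ (precisely when $W_{ij}\subset X'$). Using the criterion recalled just before Lemma~1, $X'$ is a projective $[n,k-1]_q$ code if and only if $X'$ contains none of the $(k-2)$-dimensional subspaces $W_{ij}$, for $1\le i<j\le n$.

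Next I would count the hyperplanes of $X$ that contain $U$ and compare against the number of ``bad'' ones. Because $\dim U\le k-3$, the hyperplanes of $X$ through $U$ correspond to hyperplanes of $X/U$, giving exactly $[k-\dim U]_q\ge [3]_q=q^2+q+1$ of them. For each pair $i<j$, the hyperplanes of $X$ containing both $U$ and $W_{ij}$ are those containing $U+W_{ij}$. Splitting into the cases $U\subset W_{ij}$ (where one gets $[2]_q=q+1$ such hyperplanes) and $U\not\subset W_{ij}$ (where $\dim(U+W_{ij})\ge k-1$, so there is at most one such hyperplane), one sees that each pair contributes at most $q+1$ bad hyperplanes through $U$. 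Summing over the at most $\binom{n}{2}$ pairs and invoking $q\ge\binom{n}{2}$ yields
\[
\#\{\text{bad hyperplanes through }U\}\;\le\;\binom{n}{2}(q+1)\;\le\;q(q+1)\;=\;q^{2}+q\;<\;q^{2}+q+1\;\le\;[k-\dim U]_q.
\]
Hence a good hyperplane $X'$ exists, and this $X'$ is the desired projective $[n,k-1]_q$ code.

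I expect the main technical point to be the case split in bounding the number of hyperplanes of $X$ that contain both $U$ and a given $W_{ij}$ by $q+1$; the rest is a clean comparison. A minor sanity check to make along the way is the trivial inequality $[k-\dim U]_q\ge [3]_q$, which uses exactly the hypothesis $\dim U<k-2$ — this is where the bound on $\dim U$ is consumed, and it is the reason the lemma cannot be stated for $\dim U=k-2$ (a case which is instead handled by Lemma~1).
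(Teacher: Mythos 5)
Your proposal is correct and follows essentially the same route as the paper: reformulate projectivity of a hyperplane $H\subset X$ as ``$H$ contains no $X\cap C_{ij}$'', bound the number of bad hyperplanes by $\binom{n}{2}(q+1)$, and compare with the $[k-\dim U]_q\ge q^2+q+1$ hyperplanes through $U$ using $q\ge\binom{n}{2}$. The only (harmless) difference is that you count bad hyperplanes \emph{through} $U$ via a case split on $U\subset W_{ij}$, while the paper simply bounds all hyperplanes containing some $X\cap C_{ij}$; both give the same bound and the same final inequality.
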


\begin{proof}
Since $X$ is a projective $[n,k]_q$ code, every $X\cap C_{i}$ is a hyperplane of $X$ and each $X\cap C_{ij}$ is $(k-2)$-dimensional.
Let $H$ be a hyperplane of $X$ different from either $X\cap C_i$ and $X\cap C_j$.
Then $H$ contains $X\cap C_{ij}$ only in the case when 
$$H\cap C_{i}=X\cap C_{ij}=H\cap C_{j}$$
which is equivalent to the fact that the linear $[n,k-1]_q$ code $H$ is not projective.
Therefore, a hyperplane $H\subset X$ is a projective $[n,k-1]_q$ code if and only if 
it does not contain any $X\cap C_{ij}$.

There are at most $\binom{n}{2}$ distinct $X\cap C_{ij}$ 
(we cannot state that $X\cap C_{ij}$ and $X\cap C_{i'j'}$ are distinct for distinct pairs $i,j$ and $i',j'$)
and each of them is contained in precisely $q+1$ distinct hyperplanes of $X$. 
Thus the number of hyperplanes of $X$ containing at least one of $X\cap C_{ij}$
is not greater than $$\binom{n}{2}(q+1).$$
The subspace $U$ is contained in precisely $[m]_q$ distinct hyperplanes of $X$, where $m$ is the codimension of $U$ in $X$.
Since $m\ge 3$ (by our assumption) and $q\ge \binom{n}{2}$, 
we have
$$[m]_q=\frac{q^m-1}{q-1}\ge \frac{q^3 -1}{q-1}=q^2+q+1\ge \binom{n}{2}q+\binom{n}{2}+1>\binom{n}{2}(q+1).$$
Therefore, there is a hyperplane of $X$ which contains $U$ and does not contain any  $X\cap C_{ij}$.
This projective $[n,k-1]_q$ code is as required.
\end{proof}

Lemma \ref{lemma1-1} can be generalized as follows.

\begin{lemma}\label{lemma1-3}
Let $X$ and $Y$ be projective $[n,k]_q$ codes such that 
$$\dim(X\cap Y)=k-m\;\mbox{ and }\;m\ge 2.$$
If $q\ge \binom{n}{2}$, then there are $[m]_q$ distinct projective $[n,k]_q$ codes $Z$ adjacent to $X$ and satisfying
$$\dim(Z\cap Y)=\dim(X\cap Y)+1.$$
\end{lemma}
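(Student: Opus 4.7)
The plan is to parameterize the candidate codes and then, for each hyperplane $H$ of $X$ through $X\cap Y$, show that among the $[m]_q$ candidates built from $H$ at least one is projective; varying over the $[m]_q$ choices of $H$, and noting that distinct $H$ produce distinct $Z$ (since $X\cap Z = H$), will yield the required $[m]_q$ codes. To set this up I would verify that every $Z$ adjacent to $X$ with $\dim(Z\cap Y)=k-m+1$ has a unique decomposition $Z = H+J$, where $H := X\cap Z$ is a hyperplane of $X$ through $X\cap Y$ and $J := Z\cap Y$ is a $(k-m+1)$-dimensional subspace of $Y$ through $X\cap Y$; the key point is that $H\cap J = X\cap Y$ forces $\dim(H+J)=k$, reproducing the $[m]_q^2$ count of candidates already obtained in the Grassmann graph.

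For fixed $H$ and each pair $i\ne j$, I would pass to the quotient $\pi\colon V\to V/C_{ij}$. Since $X$ and $Y$ are projective, $\pi(X)$ and $\pi(Y)$ are both $2$-dimensional, and since $X\cap C_{ij}$ has dimension $k-2$ the image $\pi(H)$ cannot be zero. So $\pi(H)$ has dimension $1$ or $2$; when it is $2$-dimensional one has $\pi(Z)=V/C_{ij}$ for every $J$, so $Z\cap C_{ij}$ automatically has the desired dimension $k-2$. Thus the only ``bad'' pairs are those with $X\cap C_{ij}\subseteq H$, in which case $\pi(H)$ is $1$-dimensional and failure at $(i,j)$ reduces to the condition $J\subseteq H+C_{ij}$.

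It then remains to bound the number of $J$'s that are bad for at least one pair. For each bad pair $(i,j)$, $\pi(Y)\subseteq\pi(H)$ is ruled out by dimensions, so $Y\cap(H+C_{ij})$ is a hyperplane of $Y$, and the $(k-m+1)$-dimensional subspaces of $Y$ through $X\cap Y$ contained in it correspond to $1$-dimensional subspaces of an $(m-1)$-dimensional quotient, giving $[m-1]_q$ of them. Bounding the number of pairs by the trivial $\binom{n}{2}$, the number of projective $Z$'s for fixed $H$ is at least
\[
[m]_q - \binom{n}{2}[m-1]_q \;=\; \left(q-\binom{n}{2}\right)[m-1]_q + 1 \;\ge\; 1.
\]
The step I expect to be the main obstacle is cleanly pinning down the bad pairs as precisely those with $X\cap C_{ij}\subseteq H$ and recognizing that the resulting bad $J$'s form the $1$-spaces of an $(m-1)$-dimensional quotient; once this structural description is in place the argument parallels Lemma \ref{lemma1-1} and the required inequality falls out from the hypothesis $q\ge\binom{n}{2}$.
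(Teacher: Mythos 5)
Your argument is correct, but it takes a genuinely different route from the paper. The paper handles $m\ge 2$ in two steps: for $m=2$ it invokes Lemma \ref{lemma1-1} directly, and for $m\ge 3$ it first uses Lemma \ref{lemma1-2} to produce a single \emph{projective} $[n,k-1]_q$ code $X'\subset X$ containing $X\cap Y$, after which every sum $X'+J$ with $J$ a $(k-m+1)$-dimensional subspace of $Y$ through $X\cap Y$ is automatically projective (a $k$-dimensional subspace containing a projective $(k-1)$-dimensional code is projective), so all $[m]_q$ candidates built from that one $X'$ work. You instead fix an \emph{arbitrary} hyperplane $H\supset X\cap Y$ of $X$ and run a counting argument over the $J$'s: a pair $(i,j)$ can only cause trouble when $X\cap C_{ij}\subseteq H$, in which case the bad $J$'s are the $[m-1]_q$ one-dimensional subspaces of the $(m-1)$-dimensional quotient $\bigl(Y\cap(H+C_{ij})\bigr)/(X\cap Y)$, and $[m]_q-\binom{n}{2}[m-1]_q=\bigl(q-\binom{n}{2}\bigr)[m-1]_q+1\ge 1$ closes the argument; distinct $H$ give distinct $Z$ because $Z\cap X=H$. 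This is a direct generalization of the pigeonhole in Lemma \ref{lemma1-1} (to which it specializes when $m=2$, since $[1]_q=1$), it treats all $m\ge 2$ uniformly, and it makes Lemma \ref{lemma1-2} unnecessary for the proof of Theorem 1. What the paper's route buys is brevity once Lemma \ref{lemma1-2} is available and a cleaner structural reason for projectivity (inherited from $X'$) rather than a count; what yours buys is self-containedness and in fact a stronger conclusion, namely at least $\bigl(q-\binom{n}{2}\bigr)[m-1]_q+1$ good choices of $J$ for \emph{every} hyperplane $H\supset X\cap Y$, not just for one distinguished $X'$.
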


\begin{proof}
In the case when $m=2$, the statement coincides with Lemma \ref{lemma1-1}.
Suppose that $m\ge 3$.
Lemma \ref{lemma1-2} implies the existence of a projective $[n,k-1]_q$ code $X'\subset X$ containing $X\cap Y$.
There are precisely $[m]_q$ distinct $(k-m+1)$-dimensional subspaces of $Y$ containing $X\cap Y$.
The sum of any such subspace and $X'$ is a projective $[n,k]_q$ code satisfying the required conditions. 
\end{proof}

Let $X$ and $Y$ be as in Lemma \ref{lemma1-3} and $q\ge \binom{n}{2}$.
Using Lemma \ref{lemma1-3}, we show recursively that the graph $\Pi(n,k)_q$ contains 
$[m]_{q}[m-1]_q\dots[2]_q$ distinct paths connecting $X$ and $Y$.
Each of these paths is of length $m$.
Since the distance between $X$ and $Y$ in the Grassmann graph $\Gamma_{k}(V)$ is equal to $m$,
all these paths are geodesics in $\Pi(n,k)_q$.
To complete the proof we need to establish the existence of projective $[n,k]_{q}$ codes $X$ and $Y$ such that 
\begin{equation}\label{eq1-1}
\dim(X\cap Y)=\max\{0, 2k-n\}.
\end{equation}
This is a consequence of the following.

\begin{lemma}\label{lemma1-4}
Suppose that $q>2$ and 
\begin{equation}\label{eq1-2}
\min\{[k]_q,[n-k]_q\}\ge n, 
\end{equation}
then there exist projective  $[n,k]_{q}$ codes $X$ and $Y$ satisfying \eqref{eq1-1}.
\end{lemma}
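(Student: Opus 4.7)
We split the argument into two cases according to whether the target intersection dimension is $0$ (when $n\geq 2k$) or $2k-n$ (when $n<2k$).

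\emph{Case $n\geq 2k$.} We aim to construct projective $X,Y$ with $X\cap Y=0$, equivalently so that the stacked $2k\times n$ generator matrix $\binom{M_X}{M_Y}$ has rank $2k$. We would build the two generator matrices column by column, picking pairs $(u_i,v_i)\in\mathbb F_q^k\times\mathbb F_q^k$ for $i=1,\ldots,n$. The constraints to preserve are: the $u_i$ are pairwise non-proportional and nonzero (for the projectivity of $X$), likewise for the $v_i$ (for $Y$), and the combined vectors $(u_i,v_i)\in\mathbb F_q^{2k}$ must eventually span $\mathbb F_q^{2k}$ (to enforce $X\cap Y=0$). The hypothesis $[k]_q\geq n$ supplies enough distinct projective points of $\mathbb P^{k-1}(\mathbb F_q)$ to use as columns of each block. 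A count shows that at each step the pairs ruled out either by proportionality with earlier columns or by obstructing the final rank have cardinality polynomially bounded in $n$ and $q$, well below the $q^{2k}$ candidate pairs; the hypothesis $q>2$ eliminates the pathological coincidences of proportionality classes that would occur over $\mathbb F_2$.

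\emph{Case $n<2k$.} Set $d=2k-n>0$. The goal is $\dim(X\cap Y)=d$, equivalently $X+Y=V$. We would fix a $d$-dimensional subspace $U\subset V$ avoiding obvious obstructions (for instance, $U$ not contained in any coordinate hyperplane $C_i$) and then seek projective codes $X,Y\supset U$ with $X+Y=V$. Passing to the quotient $V/U$, of dimension $2(n-k)$, this reduces to finding two $(n-k)$-dimensional subspaces $\overline X,\overline Y\subset V/U$ with $\overline X\cap\overline Y=0$ whose preimages in $V$ are projective $[n,k]_q$ codes. The hypothesis $[n-k]_q\geq n$ supplies enough projective points in the relevant quotient space to run a construction analogous to the previous case.

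\emph{Main obstacle.} The delicate point in both cases is simultaneously enforcing three constraints: projectivity of $X$, projectivity of $Y$, and the prescribed intersection dimension. The projectivity conditions are local (they depend on pairs of coordinates), while the rank/intersection condition is global. The greedy construction must therefore be organized so that, at each step, the pairs excluded by local proportionality together with those excluded by the global rank requirement do not exhaust all $q^{2k}$ possibilities. The precise numerical hypotheses $q>2$ and $\min\{[k]_q,[n-k]_q\}\geq n$ appear to be calibrated exactly to leave sufficient room for this; weakening either one makes explicit obstructing configurations possible, as is illustrated by the simplex-code examples discussed later in the paper.
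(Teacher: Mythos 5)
Your plan correctly identifies the two cases and the reduction of $X\cap Y=0$ to the rank of the stacked $2k\times n$ matrix, but the heart of the matter --- the count that is supposed to make the greedy column-by-column construction work --- is asserted rather than performed, and in the regime permitted by the hypothesis it fails as stated. At step $i\le 2k$ the pairs excluded by proportionality with earlier columns number about $2\bigl(1+(i-1)(q-1)\bigr)q^{k}$ (each forbidden vector $u_i$ kills all $q^{k}$ choices of $v_i$, and symmetrically), while the pairs excluded by the rank requirement form a subspace of size up to $q^{2k-1}$; the resulting union bound $2nq^{k+1}+q^{2k-1}<q^{2k}$ needs roughly $n<q^{k-1}/2$, which is strictly stronger than the hypothesis $n\le[k]_q\approx q^{k-1}$. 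So the excluded pairs are \emph{not} ``well below the $q^{2k}$ candidates.'' The non-product structure of the exclusions is the real obstruction: the pairs surviving the two proportionality constraints form a product set $G_u\times G_v$, which near the end of the construction can be as small as $(q-1)^2$ when $n$ is close to $[k]_q$, and such a product set can lie entirely inside the proper subspace spanned by the earlier columns, so the greedy can get stuck. You name this as ``the delicate point'' but supply no mechanism to resolve it; likewise the stated role of $q>2$ (eliminating ``pathological coincidences of proportionality classes'') is not substantiated.

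The paper sidesteps all of this with an explicit construction: $M=[I,A,B]$ and $N=[\lambda A,I,B]$, where $A$ is a fixed almost-bidiagonal matrix with one adjustable entry $a$. Projectivity is immediate from the column shapes, and $X\cap Y=0$ reduces to requiring that $\lambda^{-1}$ not be an eigenvalue of $A^{2}$; the hypotheses $q>2$ (together with choosing $a$ a non-square when $q$ is odd, and $q\ge 4$ when $q$ is even) enter exactly to guarantee a suitable $\lambda$ exists. For $n<2k$ the paper does not pass to a quotient: it finds disjoint projective $[n,n-k]_q$ codes $X',Y'$ in $V$ itself and sets $X=X'+Z$, $Y=Y'+Z$ for a common complement $Z$ of $X'+Y'$, using the fact that any subspace containing a projective code is again projective --- a fact your quotient formulation would also need but does not state. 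As written, your argument has a genuine gap; to repair it you must either carry out a correct (and necessarily more careful) counting or ordering argument, or replace the greedy step by an explicit construction.
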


The inequality $q\ge \binom{n}{2}$ guarantees that the conditions of Lemma \ref{lemma1-4} hold.

\begin{proof}[Proof of Lemma \ref{lemma1-4}]
Suppose $2k\le n$.  Consider the projective $[n,k]_q$ codes $X$ and $Y$ represented by the matrices 
\[M = [I, A, B], ~~~ N = [\lambda A, I, B]\]
where $I$ is the identity matrix of rank $k$, 
the rank $k$ matrix $A=[a_{ij}]$ with $a\neq 0, 1$  is as follows 
\[A ~ = ~ \left[\begin{array}{cccccccc}
1 & 0 & ... & 0 & 0 & 0 \\
1 & 1 & ... & 0 & 0 & 0\\
0 & 1 & ... & 0 & 0 &  0\\
... & ... & ... & ... & ... & ... \\
0 & 0 & ... & 1 & 1 & a \\
0 & 0 & ... & 0 & 1& 1
\end{array}\right]\]
(the only non-zero entries are $a_{ii}=a_{i+1i}=1$ and $a_{k-1k}=a$)
and $B$ is a $k\times (n-2k)$-matrix with no null column, no column proportional to a column of $[I, A]$ and no two mutually proportional columns. Needless to say that $B$ only occurs when $2k < n$. 
Such a matrix exists, since $[k]_q- 2k\ge n-2k$.
We must choose $\lambda$ in such a way that  
the matrix
\[\left[\begin{array}{c}
M\\
N
\end{array}\right] ~ = ~ \left[\begin{array}{ccc}
I & A & B \\
\lambda A & I & B
\end{array}\right]\]
has rank $2k$. This matrix has the same rank as the matrix
\[\left[\begin{array}{ccc}
I & A & B \\
O & I-\lambda A^2 & B-\lambda AB
\end{array}\right].\]
So, we only must choose $\lambda\neq 0$ such that $\lambda^{-1}$ is not an eigenvalue of the matrix $A^2$. 
However, $A$ has at most three distinct eigenvalues , namely $1$ (of multiplicity $k-2$) and possibly $1\pm \sqrt{a}$. 
The latter exist only if $a$ is a square.   
If $q$ is odd, then $\mathbb{F}_q$ contains non-square elements and we can choose any of them as $a$. 
With such a choice of $a$, $1$ is the unique eigenvalue of $A^2$ and any $\lambda \neq 1$ is as required. 
If $q$ is even, then $A^2$ has two eigenvalues, namely $1$ and $1+a$, of multiplicity $k-2$ and $2$ (respectively). 
In this case, we have $q\geq 4$ (since $q\neq 2$ by our assumption) and $\mathbb{F}_q$ contains non-zero elements different from $1$ and $(1+a)^{-1}$. We choose any of them as $\lambda$.

Now, we suppose that $n<2k$. 
Then $2(n-k)<n$ and, by \eqref{eq1-2}, we have $[n-k]_q\ge n$.
The above arguments imply the existence of projective $[n,n-k]_q$ codes $X'$ and $Y'$ such that $X'\cap Y'=0$.
Let $Z$ be a complement of $X'+Y'$. 
The dimension of $Z$ is equal to $2k-n$.
The projective $[n,k]_q$ codes $X=X'+Z$ and $Y=Y'+Z$ satisfy \eqref{eq1-1}.
\end{proof}

\begin{rem}\label{rem-l14}{\rm
The latter statement holds for $q=2$ if $3\le k\le n-3$.
Consider the case when $n\ge 2k\ge 6$ and $q$ is an arbitrary.
Let $X$ and $Y$ be the projective $[n,k]_q$ codes represented by matrices 
\[M = [I, U-A, B], ~~~ N = [-I, I-U, B']\]
where $I$ is the identity matrix of rank $k$, $U$ is the rank $k$ matrix where each element is $1$ and the rank $k$ matrix $A=[a_{ij}]$ is as follows 
\[A ~ = ~ \left[\begin{array}{ccccccc}
0 & 1 & 0 &  ... & 0 & 0 \\
0 & 0 & 1 &  ... & 0 & 0\\
... & ... &  ... & ... & ... & ... \\
0 & 0 & 0 &  ... & 1 & 0 \\
0 & 0 & 0 &  ... & 0 & 1 \\
0 & 0 & 0 &  ... & 0 & 0
\end{array}\right]\]
(the only non-zero entries are $a_{ii+1}=1$).
The condition $k \geq 3$ guarantees that the matrices $[I, U-A]$ and $[-I, I-U]$ have no proportional columns.
As above, $B$ and $B'$ are $k\times (n-2k)$-matrices with no null column, no column proportional to a column of $[I, U-A]$ and $[-I, I-U]$ (respectively); they occur if $2k < n$. The rank of the matrix
\[\left[\begin{array}{c}
M\\
N
\end{array}\right] ~ = ~ \left[\begin{array}{ccc}
I & U-A & B \\
-I & I-U & B'
\end{array}\right]\]
 is equal to the rank of  the matrix
\[ \left[\begin{array}{ccc}
I & U-A & B \\  
O & I-A & B+B'
\end{array}\right].\]
Since 
\[\left[\begin{array}{cc}
I & U-A \\   
O & I-A 
\end{array}\right]\]
is an upper triangular matrix whose diagonal is formed by $1$, 
our matrices have rank $2k$ which means that $X\cap Y = 0$.
As in the proof of Lemma \ref{lemma1-4}, we show that the statement holds if $n-k<k\le n-3$.
}\end{rem}

\section{The graphs of simplex codes}
Through this section, we suppose that $n=[k]_q$. 
This is the case when  $\Pi(n,k)_{q}$ is the graph of $q$-ary simplex codes of dimension $k$.

\subsection{Simplex vectors and simplex codes}
If $X$ is a $q$-ary simplex code of dimension $k$,
then every hyperplane of $X$ is the intersection of $X$ with a certain coordinate hyperplane $C_{i}$
(indeed, $X$ contains precisely $[k]_q=n$ hyperplanes and all hyperplanes $X\cap C_{i}$ are mutually distinct).
Every $1$-dimensional subspace of $X$ is the intersection of precisely $[k-1]_q$ distinct $X\cap C_i$.
This means that every non-zero codeword $x\in X$ has precisely $[k-1]_q$ coordinates equal to $0$, i.e.
the Hamming weight of $x$ is $$[k]_q-[k-1]_q=q^{k-1}.$$
Then the Hamming distance between any two codewords of $X$ is equal to $q^{k-1}$
(indeed, every shifting $x\to x+y$ preserves the Hamming distance and it transfers  $X$ to itself if $x\in X$).

By \cite{B} (see also \cite[Theorem 7.9.5]{HP-book}), $q$-ary simplex codes of dimension $k$ can be characterized as subspaces of $V$
maximal with respect to the property that the Hamming weight of all non-zero vectors is equal to $q^{k-1}$.

We say that $x\in V$ is a {\it simplex vector} if the Hamming weight of this vector is $q^{k-1}$.
A non-zero vector is simplex if and only if it is a codeword of a certain $q$-ary simplex code of dimension $k$
(all codewords of simplex codes are simplex vectors and the group of monomial linear automorphisms of $V$
acts transitively on the set of simplex vectors and the set of simplex codes).
By the above characterization of simplex codes, a subspace of $V$ is a $q$-ary simplex code of dimension $k$
if and only if it is maximal with respect to the property that all non-zero vectors are simplex.

\begin{theorem}\label{theorem2}
The set of all simplex vectors is precisely the set of vectors $x=(x_{1},\dots, x_{n})$ satisfying all the equations
$$\sum_{i_1<\dots< i_{p^{j}}}x_{i_1}^{q-1}\dots\; x_{i_{p^{j}}}^{q-1}  =  0$$
for $j\in\{0,\dots,mk-m-1\}$, where $q=p^m$ and $p$ is a prime number.
\end{theorem}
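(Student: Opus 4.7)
The plan is to translate each equation into a congruence on the Hamming weight $w(x)$ of $x$ and then combine them. First I would use Fermat's little theorem: for $a\in\mathbb{F}_q$, one has $a^{q-1}=1$ when $a\ne 0$ and $0^{q-1}=0$, so $x_i^{q-1}$ is the indicator of $x_i\ne 0$. Since the $p^j$-th elementary symmetric polynomial in $n$ zero-one variables counts the $p^j$-subsets of the support, the $j$-th equation rewrites as
\[
\binom{w(x)}{p^j}\equiv 0\pmod{p}.
\]

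Next I would apply Lucas's theorem. Writing $w=\sum_i w_i p^i$ in base $p$ and noting that $p^j$ has a single nonzero base-$p$ digit, equal to $1$, at position $j$, one gets $\binom{w}{p^j}\equiv w_j\pmod p$. Thus letting $j$ range over $0,1,\dots,mk-m-1$ forces the lowest $m(k-1)$ base-$p$ digits of $w(x)$ to vanish, that is, $q^{k-1}=p^{m(k-1)}$ divides $w(x)$.

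To conclude I would use the bound $w(x)\le n=[k]_q=1+q+\dots+q^{k-1}<2q^{k-1}$, so the only multiples of $q^{k-1}$ in $\{0,1,\dots,n\}$ are $0$ and $q^{k-1}$. Hence a nonzero $x\in V$ solves the whole system if and only if $w(x)=q^{k-1}$, i.e.\ if and only if $x$ is a simplex vector; the zero vector is also a solution, so the set equality stated in the theorem should be read either with the convention that $0$ counts among the simplex vectors or with the implicit restriction to nonzero vectors.

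The only non-routine ingredient is the reduction $\binom{w}{p^j}\equiv w_j\pmod p$ via Lucas's theorem, which I expect to be the main (but mild) obstacle; everything else is bookkeeping with $q=p^m$ together with the elementary inequality $[k]_q<2q^{k-1}$, which in turn reduces to $[k-1]_q<q^{k-1}$.
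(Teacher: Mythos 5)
Your proof is correct and takes essentially the same route as the paper's: both reduce the $j$-th equation to the condition $\binom{w(x)}{p^{j}}\equiv 0 \pmod p$ via Fermat's little theorem and then apply Lucas's theorem together with the bound $n=[k]_q<2q^{k-1}$ (the paper organizes the Lucas step by writing $w(x)=p^{j}s$ with $p\nmid s$, you by reading off base-$p$ digits, which is the same computation). Your remark that the zero vector satisfies the system without having weight $q^{k-1}$ is a legitimate small caveat that the paper's own proof also passes over silently.
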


\begin{proof}
We use the following Lucas's theorem:
for any prime number $p$ and expansions $a=\sum_{i=0}^r a_ip^i$ and $b=\sum_{i=0}^r b_ip^i$ we have
$${a\choose b}\equiv\prod_{i=0}^r {a_i\choose b_i} \;\; (\mathrm{mod}~ p);$$
in particular,
$${a\choose b}\not\equiv~ 0~ (\mathrm{mod}~ p)\;\Longleftrightarrow\; a_i\geq b_i\;\textrm{for all $i$.}$$
Every natural number can be written uniquely as $p^js$, where $j\geq 0$ and $s$ is not divisible by $p$. 
If the Hamming weight of $x=(x_{1},\dots,x_{n})$ is equal to such $p^js$, then 
the inequality $p^{j}s \le n=[k]_q=q^{k-1}+\dots +q+1$ implies that for this Hamming weight one of the following possibilities is realized:
\begin{enumerate}
\item[$\bullet$] $p^{j}s$ with $j\le mk-m-1$ and $s\ge 1$,
\item[$\bullet$] $p^{mk-m}=q^{k-1}$, i.e. $x$ is a simplex vector.
\end{enumerate}
For the first case Lucas's theorem shows that 
$$\sum_{i_1<\dots< i_{p^j}}x_{i_1}^{q-1}\dots\; x_{i_{p^j}}^{q-1} = {p^js\choose p^j}\ne 0.$$
In the second case, we obtain that 
$$\sum_{i_1<\dots< i_{p^j}}x_{i_1}^{q-1}\dots\; x_{i_{p^j}}^{q-1} = {p^{mk-k}\choose p^j}=0$$
for every $j\in\{0,\dots,mk-m-1\}$.
\end{proof}

\begin{cor}
All $q$-ary simplex codes of dimension $k$ can be characterized as subspaces of $V$ 
maximal with respect to the property that all vectors satisfy the equations from Theorem \ref{theorem2}.
\end{cor}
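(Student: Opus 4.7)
The plan is to observe that this corollary is essentially a direct combination of Theorem~\ref{theorem2} with the characterization of simplex codes (due to Bonisoli, cited as \cite{B}) that was recalled just before Theorem~\ref{theorem2}. That characterization says: the $q$-ary simplex codes of dimension $k$ are precisely the subspaces of $V$ maximal with respect to the property that every non-zero vector has Hamming weight $q^{k-1}$, i.e.\ is a simplex vector.

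First I would check that the equations of Theorem~\ref{theorem2} are also satisfied by the zero vector, since every monomial on the left-hand side vanishes when $x=0$. Combined with Theorem~\ref{theorem2}, this yields the following: a vector $x\in V$ satisfies all the equations from Theorem~\ref{theorem2} if and only if $x$ is either zero or a simplex vector.

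Next I would translate this equivalence from vectors to subspaces. For any subspace $U\subseteq V$, the statement \emph{every vector of $U$ satisfies the equations from Theorem~\ref{theorem2}} is equivalent to the statement \emph{every non-zero vector of $U$ is a simplex vector}. The two properties therefore define the same family of subspaces, hence the same maximal elements, and by the Bonisoli characterization these maximal elements are exactly the $q$-ary simplex codes of dimension $k$.

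I do not foresee any real obstacle: once Theorem~\ref{theorem2} is in hand, this corollary is a direct translation of the cited characterization into the language of polynomial equations. The only point requiring some care is the bookkeeping around the zero vector, which is resolved by the trivial observation that zero satisfies each equation.
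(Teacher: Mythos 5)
Your proposal is correct and matches the paper's (implicit) argument: the corollary is stated without proof precisely because it follows immediately from Theorem \ref{theorem2} together with the Bonisoli-type characterization of simplex codes recalled just before it, exactly as you describe. Your remark about the zero vector is a worthwhile clarification, since Theorem \ref{theorem2} as literally stated overlooks that $x=0$ also satisfies all the equations, and your handling of that point is the right one.
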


\begin{cor}
The graph $\Pi(7,3)_2$ consisting of all binary simplex codes of dimension $3$  
is isomorphic to the incidence graph $\Gamma_{1,3}({\mathbb F}^{4}_{2})$.
\end{cor}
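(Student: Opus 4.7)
The plan is to combine Theorem \ref{theorem2} with the classical Klein correspondence (the $D_3 \cong A_3$ isomorphism of geometries). First, I would specialize Theorem \ref{theorem2} to $n = 7$, $k = 3$, $q = p = 2$, $m = 1$. Since $mk - m - 1 = 1$, the simplex vectors in $V = \mathbb{F}_2^7$ are cut out by exactly two equations, $\sum_{i=1}^7 x_i = 0$ and $\sum_{i<j} x_i x_j = 0$. The first defines a $6$-dimensional subspace $W \subset V$ containing every simplex code, and the second is the vanishing of the quadratic form $Q(x) := \sum_{i<j} x_i x_j$ on $V$.

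Next, I would verify that $Q$ makes $W$ into a non-degenerate hyperbolic quadratic space over $\mathbb{F}_2$ of Witt index $3$. A short computation gives the polar form of $Q$ on $V$ as $\langle x, y\rangle = \sum_i x_i y_i + \bigl(\sum_i x_i\bigr)\bigl(\sum_j y_j\bigr)$, which restricts on $W$ to the standard alternating form $\sum_i x_i y_i$; this restriction is non-degenerate, since the only candidate for its radical, the all-ones vector $(1,\dots,1)$, fails to lie in $W$ because $7$ is odd. Moreover, the Witt index must equal $3$, since binary simplex codes of dimension $3$ exist and are, by Corollary 1, totally singular for $Q$. Thus $(W, Q|_W)$ is a hyperbolic $6$-dimensional quadratic space over $\mathbb{F}_2$, and Corollary 1 further identifies the binary simplex codes of dimension $3$ with the maximal totally singular subspaces.

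Finally, I would invoke the Klein correspondence, which puts the two families of maximal singular subspaces of $(W, Q|_W)$ in bijection with the $1$-dimensional and the $3$-dimensional subspaces of $\mathbb{F}_2^4$. Two maximal singular subspaces in the same family intersect in odd dimension (hence in dimension $1$ when distinct), while those in opposite families intersect in dimension $0$ or $2$, with dimension $2$ occurring precisely when the corresponding point and hyperplane of $\mathrm{PG}(3,2)$ are incident. Consequently, adjacency in $\Pi(7,3)_2$ (intersection of dimension $k - 1 = 2$) matches exactly the edge relation of $\Gamma_{1,3}(\mathbb{F}_2^4)$, yielding the required graph isomorphism. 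The hardest part will be this last step: justifying, either by citing the Klein correspondence as a black box or via a direct argument, that the parity of $\dim(X \cap Y)$ separates the two families and that dimension-$2$ intersections between opposite-family members encode incidence. As a sanity check, there are $|S_7|/|\mathrm{GL}_3(\mathbb{F}_2)| = 5040/168 = 30$ binary simplex codes of dimension $3$, matching the $15 + 15 = 30$ vertices of $\Gamma_{1,3}(\mathbb{F}_2^4)$.
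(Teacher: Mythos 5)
Your proposal is correct and follows essentially the same route as the paper: specialize Theorem \ref{theorem2} to obtain the two equations, view $\sum_{i<j}x_ix_j$ as a quadratic form $Q$ on the $6$-dimensional hyperplane $W$ so that simplex codes become the maximal $Q$-singular subspaces, and then identify the graph of these subspaces with $\Gamma_{1,3}(\mathbb{F}_2^4)$ via the Klein correspondence. The only difference is that you spell out the non-degeneracy, the Witt index, and the intersection pattern of the two families, whereas the paper cites this final identification as well known from Brouwer--Cohen--Neumaier.
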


\begin{proof}
Suppose that $q=2$ and $k=3$ (then $n=7$).
In this case,  the set of all simplex vectors $x=(x_{1},\dots,x_{7})$ is described by the following equations 
$$\sum^{7}_{i=1}x_{i}=0\;\mbox{ and }\;\sum_{i<j}x_{i}x_{j}=0.$$
Let $Q$ be the restriction of the quadratic form 
$\sum_{i<j}x_{i}x_{j}$ to the $6$-dimensional subspace $W$ defined by the first equation.
Then the set of all simplex vectors is the quadric of $Q$-singular vectors.
It is well-known that the restriction of the Grassmann graph $\Gamma_{3}(W)$ to the set of maximal $Q$-singular subspaces 
is isomorphic to $\Gamma_{1,3}({\mathbb F}^{4}_{2})$ (see, for example, \cite[Section 9.4]{drg-book}).
\end{proof}

\subsection{Two examples}
We construct two binary simplex codes $X,Y$ of dimension $4$ such that $\dim(X\cap Y)=2$ and 
there is no vertex of $\Pi(15,4)_2$ adjacent to both $X,Y$.
Consider the following $2$-dimensional subspaces of ${\mathbb F}^{15}_{2}$ formed by simplex vectors
$$L_1=\left\{
\begin{array}{c}
u_1\\
u_2\\
u_1+u_2\\
\end{array}
\right\}=\left\{\begin{array}{cccccccccccccccc}
0&0&0&0&0&0&0&1&1&1&1&1&1&1&1\\
0&0&0&1&1&1&1&0&0&0&0&1&1&1&1\\
0&0&0&1&1&1&1&1&1&1&1&0&0&0&0\\
\end{array}
\right\},$$
$$L_2=\left\{
\begin{array}{c}
v_1\\
v_2\\
v_1+v_2\\
\end{array}
\right\}=\left\{\begin{array}{cccccccccccccccc}
0&1&1&0&0&1&1&0&1&0&1&0&1&0&1\\
1&0&1&0&1&0&1&0&0&1&1&0&1&1&0\\
1&1&0&0&1&1&0&0&1&1&0&0&0&1&1\\
\end{array}
\right\},$$
$$L_3=\left\{
\begin{array}{c}
w_1\\
w_2\\
w_1+w_2\\
\end{array}
\right\}=\left\{\begin{array}{cccccccccccccccc}
0&1&1&0&0&0&0&1&0&1&1&1&0&1&1\\
1&0&1&1&0&1&1&0&0&0&0&1&0&1&1\\
1&1&0&1&0&1&1&1&0&1&1&0&0&0&0\\
\end{array}
\right\}.$$
Then $L_{1}+L_{2}$ and $L_{2}+L_{3}$ are the binary simplex codes of dimension $4$ whose generator matrices are
$$\left[\begin{array}{cccccccccccccccc}
0&0&0&0&0&0&0&1&1&1&1&1&1&1&1\\
0&0&0&1&1&1&1&0&0&0&0&1&1&1&1\\
0&1&1&0&0&1&1&0&1&0&1&0&1&0&1\\
1&0&1&0&1&0&1&0&0&1&1&0&1&1&0\\
\end{array}
\right]$$
and
$$\left[\begin{array}{cccccccccccccccc}
0&1&1&0&0&1&1&0&1&0&1&0&1&0&1\\
1&0&1&0&1&0&1&0&0&1&1&0&1&1&0\\
0&1&1&0&0&0&0&1&0&1&1&1&0&1&1\\
1&0&1&1&0&1&1&0&0&0&0&1&0&1&1\\
\end{array}
\right],$$
respectively.
The Hamming distance between any two codewords in a binary simplex code of dimension $4$ is equal to $8$,
but the Hamming distance between any simplex vectors $x\in L_{1}$ and $y\in L_{3}$ is not equal to $8$.
If $Z$ is a $4$-dimensional subspace of ${\mathbb F}^{15}_{2}$ adjacent to both $L_{1}+L_{2}$ and $L_{2}+L_{3}$,
then $Z$ has non-zero intersections with $L_{1}$ and $L_{3}$, 
in other words, $Z$ contains a pair of simplex vectors which are not at Hamming distance $8$.
This means that $Z$ is not a binary simplex code.

Now, we describe two ternary simplex codes $X,Y$ of dimension $3$ such that $\dim (X\cap Y)=1$ and there is no vertex of $\Pi(13,3)_3$
adjacent to both $X,Y$.
Let $X$ and $Y$ be the ternary simplex codes of dimension $3$ whose generator matrices are
$$\left[
\begin{array}{c}
w\\
v_1\\
v_2\\
\end{array}
\right]=\left[\begin{array}{ccccccccccccc}
0&0&0&0&1&1&1&1&1&1&1&1&1\\
0&1&1&1&0&0&0&1&2&1&1&2&2\\
1&0&1&2&0&1&2&0&0&1&2&1&2\\
\end{array}
\right]$$
and 
$$\left[
\begin{array}{c}
w\\
u_1\\
u_2\\
\end{array}
\right]=\left[\begin{array}{ccccccccccccc}
0&0&0&0&1&1&1&1&1&1&1&1&1\\
1&0&1&1&0&0&2&0&1&1&1&2&2\\
2&1&0&1&0&1&0&2&0&1&2&1&2\\
\end{array}
\right],$$
respectively.
Consider all $3$-dimensional subspaces adjacent to both $X,Y$
(there are precisely $16$ such subspaces). Their generator matrices are the following 
$$\left[
\begin{array}{c}
w\\
v_1\\
u_1\\
\end{array}
\right]=\left[\begin{array}{ccccccccccccc}
0&0&\mathbf{0}&\mathbf{0}&1&1&1&1&1&1&1&1&1\\
0&1&\mathbf{1}&\mathbf{1}&0&0&0&1&2&1&1&2&2\\
1&0&\mathbf{1}&\mathbf{1}&0&0&2&0&1&1&1&2&2\\
\end{array}
\right]$$
$$\left[
\begin{array}{c}
w\\
v_1\\
u_2\\
\end{array}
\right]=\left[\begin{array}{ccccccccccccc}
0&\mathbf{0}&0&\mathbf{0}&1&1&1&1&1&1&1&1&1\\
0&\mathbf{1}&1&\mathbf{1}&0&0&0&1&2&1&1&2&2\\
2&\mathbf{1}&0&\mathbf{1}&0&1&0&2&0&1&2&1&2\\
\end{array}
\right]$$
$$\left[
\begin{array}{c}
w\\
v_1\\
u_1+u_2\\
\end{array}
\right]=\left[\begin{array}{ccccccccccccc}
0&\mathbf{0}&\mathbf{0}&0&1&1&1&1&1&1&1&1&1\\
0&\mathbf{1}&\mathbf{1}&1&0&0&0&1&2&1&1&2&2\\
0&\mathbf{1}&\mathbf{1}&2&0&1&2&2&1&2&0&0&1\\
\end{array}
\right]$$
$$\left[
\begin{array}{c}
w\\
v_1\\
u_1+2u_2\\
\end{array}
\right]=\left[\begin{array}{ccccccccccccc}
0&0&0&0&1&\mathbf{1}&\mathbf{1}&1&1&1&1&1&1\\
0&1&1&1&0&\mathbf{0}&\mathbf{0}&1&2&1&1&2&2\\
2&2&1&0&0&\mathbf{2}&\mathbf{2}&1&1&0&2&1&0\\
\end{array}
\right]$$
$$\left[
\begin{array}{c}
w\\
v_2\\
u_1\\
\end{array}
\right]=\left[\begin{array}{ccccccccccccc}
\mathbf{0}&0&\mathbf{0}&0&1&1&1&1&1&1&1&1&1\\
\mathbf{1}&0&\mathbf{1}&2&0&1&2&0&0&1&2&1&2\\
\mathbf{1}&0&\mathbf{1}&1&0&0&2&0&1&1&1&2&2\\
\end{array}
\right]$$
$$\left[
\begin{array}{c}
w\\
v_2\\
u_2\\
\end{array}
\right]=\left[\begin{array}{ccccccccccccc}
\mathbf{0}&0&0&\mathbf{0}&1&1&1&1&1&1&1&1&1\\
\mathbf{1}&0&1&\mathbf{2}&0&1&2&0&0&1&2&1&2\\
\mathbf{2}&1&0&\mathbf{1}&0&1&0&2&0&1&2&1&2\\
\end{array}
\right]$$
$$\left[
\begin{array}{c}
w\\
v_2\\
u_1+u_2\\
\end{array}
\right]=\left[\begin{array}{ccccccccccccc}
0&0&\mathbf{0}&\mathbf{0}&1&1&1&1&1&1&1&1&1\\
1&0&\mathbf{1}&\mathbf{2}&0&1&2&0&0&1&2&1&2\\
0&1&\mathbf{1}&\mathbf{2}&0&1&2&2&1&2&0&0&1\\
\end{array}
\right]$$
$$\left[
\begin{array}{c}
w\\
v_2\\
u_1+2u_2\\
\end{array}
\right]=\left[\begin{array}{ccccccccccccc}
0&0&0&0&1&1&1&\mathbf{1}&\mathbf{1}&1&1&1&1\\
1&0&1&2&0&1&2&\mathbf{0}&\mathbf{0}&1&2&1&2\\
2&2&1&0&0&2&2&\mathbf{1}&\mathbf{1}&0&2&1&0\\
\end{array}
\right]$$
$$\left[
\begin{array}{c}
w\\
v_1+v_2\\
u_1\\
\end{array}
\right]=\left[\begin{array}{ccccccccccccc}
0&0&0&0&1&1&1&1&\mathbf{1}&\mathbf{1}&1&1&1\\
1&1&2&0&0&1&2&1&\mathbf{2}&\mathbf{2}&0&0&1\\
1&0&1&1&0&0&2&0&\mathbf{1}&\mathbf{1}&1&2&2\\
\end{array}
\right]$$
$$\left[
\begin{array}{c}
w\\
v_1+v_2\\
u_2\\
\end{array}
\right]=\left[\begin{array}{ccccccccccccc}
0&0&0&0&1&1&\mathbf{1}&1&\mathbf{1}&1&1&1&1\\
1&1&2&0&0&1&\mathbf{2}&1&\mathbf{2}&2&0&0&1\\
2&1&0&1&0&1&\mathbf{0}&2&\mathbf{0}&1&2&1&2\\
\end{array}
\right]$$
$$\left[
\begin{array}{c}
w\\
v_1+v_2\\
u_1+u_2\\
\end{array}
\right]=\left[\begin{array}{ccccccccccccc}
0&0&0&0&1&1&\mathbf{1}&1&1&\mathbf{1}&1&1&1\\
1&1&2&0&0&1&\mathbf{2}&1&2&\mathbf{2}&0&0&1\\
0&1&1&2&0&1&\mathbf{2}&2&1&\mathbf{2}&0&0&1\\
\end{array}
\right]$$
$$\left[
\begin{array}{c}
w\\
v_1+v_2\\
u_1+2u_2\\
\end{array}
\right]=\left[\begin{array}{ccccccccccccc}
\mathbf{0}&\mathbf{0}&0&0&1&1&1&1&1&1&1&1&1\\
\mathbf{1}&\mathbf{1}&2&0&0&1&2&1&2&2&0&0&1\\
\mathbf{2}&\mathbf{2}&1&0&0&2&2&1&1&0&2&1&0\\
\end{array}
\right]$$
$$\left[
\begin{array}{c}
w\\
v_1+2v_2\\
u_1\\
\end{array}
\right]=\left[\begin{array}{ccccccccccccc}
\mathbf{0}&0&0&\mathbf{0}&1&1&1&1&1&1&1&1&1\\
\mathbf{2}&1&0&\mathbf{2}&0&2&1&1&2&0&2&1&0\\
\mathbf{1}&0&1&\mathbf{1}&0&0&2&0&1&1&1&2&2\\
\end{array}
\right]$$
$$\left[
\begin{array}{c}
w\\
v_1+2v_2\\
u_2\\
\end{array}
\right]=\left[\begin{array}{ccccccccccccc}
\mathbf{0}&\mathbf{0}&0&0&1&1&1&1&1&1&1&1&1\\
\mathbf{2}&\mathbf{1}&0&2&0&2&1&1&2&0&2&1&0\\
\mathbf{2}&\mathbf{1}&0&1&0&1&0&2&0&1&2&1&2\\
\end{array}
\right]$$
$$\left[
\begin{array}{c}
w\\
v_1+2v_2\\
u_1+u_2\\
\end{array}
\right]=\left[\begin{array}{ccccccccccccc}
0&\mathbf{0}&0&\mathbf{0}&1&1&1&1&1&1&1&1&1\\
2&\mathbf{1}&0&\mathbf{2}&0&2&1&1&2&0&2&1&0\\
0&\mathbf{1}&1&\mathbf{2}&0&1&2&2&1&2&0&0&1\\
\end{array}
\right]$$
$$\left[
\begin{array}{c}
w\\
v_1+2v_2\\
u_1+2u_2\\
\end{array}
\right]=\left[\begin{array}{ccccccccccccc}
0&0&0&0&\mathbf{1}&1&1&1&1&\mathbf{1}&1&1&1\\
2&1&0&2&\mathbf{0}&2&1&1&2&\mathbf{0}&2&1&0\\
2&2&1&0&\mathbf{0}&2&2&1&1&\mathbf{0}&2&1&0\\
\end{array}
\right].$$
None of these codes is projective.


\begin{thebibliography}{99}

\bibitem{B} 
A. Bonisoli, {\it Every equidistant linear code is a sequence of dual Hamming codes},
Ars Combin. 18(1984), 181--186.

\bibitem{drg-book}
A.E. Brouwer, A.M. Cohen, A. Neumaier, {\it Distance-Regular Graphs},  Springer, 1989.

\bibitem{CG}
I. Cardinali, L. Giuzzi, {\it Codes and caps from orthogonal Grassmannians}, Finite Fields Appl. 24(2013), 148--169.

\bibitem{HP-book} 
W.C. Huffman, V. Pless, {\it Fundamentals of Error-Correcting Codes}, Cambridge University Press, 2003.

\bibitem{KP1} 
M. Kwiatkowski, M. Pankov, {\it On the distance between linear codes}, Finite Fields Appl. 39(2016), 251--263.

\bibitem{KP2} 
M. Kwiatkowski, M. Pankov, {\it Chow's theorem for linear codes}, Finite Fields Appl. 46 (2017) 147--162.

\bibitem{N}
D. Nogin, {\it Codes associated to Grassmannians}, Arithmetic, geometry and coding theory, 145--154, de Gruyter, 1996.

\bibitem{Pank-book}
M. Pankov, {\it Geometry of Semilinear Embeddings. Relations to Graphs and Codes}, World Scientific, 2015.

\bibitem{R}
C.T. Ryan, {\it Projective codes based on Grassmann varieties}, Congr. Numer. 57(1987), 273--279.

\bibitem{TVN} 
M. Tsfasman, S. Vl\v{a}du\c{t}, D. Nogin, {\it Algebraic Geometry Codes. Basic notions}, Amer. Math. Soc., Providence, 2007.
\end{thebibliography}
\end{document}